\newtheorem{defn}{Definition}[section]
\newtheorem{exm}[defn]{Example}
\newtheorem{theorem}[defn]{Theorem}
\newtheorem*{theorem*}{Theorem}
\newtheorem{lemma}[defn]{Lemma}
\newtheorem{corollary}[defn]{Corollary}
\newtheorem{rem}[defn]{Remark}
\numberwithin{equation}{section}
\newenvironment{remark}{\begin{rem}\em}{\end{rem}}
\newcommand\V{\bigvee}
\newcommand\ie{i.e.}
\newcommand\st{\mid}
\newcommand\topology{\operatorname{\Omega}}
\newcommand\spectrum{\operatorname{\Sigma}}
\newcommand\wi{\eqslantless}
\newcommand\Top{\textit{Top}}
\newcommand\Loc{\textit{Loc}}
\newcommand\ext{\operatorname{ext}}
\begin{document}

\title{Scott locales\thanks{Work funded by FCT/Portugal through project UID/4459/2025.}}
\author{Pedro Resende and Jo\~{a}o Paulo Santos}

\date{~}

\maketitle

\vspace*{-1cm}
\begin{abstract}
We prove some facts about locales $L$ equipped with the Scott topology $\topology(L)$, in particular studying a canonical frame homomorphism $\phi:\topology(L)\to L$ which is motivated by an application to cognitive science. Such a topological locale $L$ is called a \emph{Scott locale} if the inclusion of primes $p:\spectrum(L)\to L$ is continuous. We prove that the spectrum $\spectrum(L)$ of a Scott locale $L$ is necessarily $T_1$, and that preregular locales (a generalization of regular locales) are Scott locales. If $L$ is the topology of a topological space $X$ we find a (necessarily unique) continuous map $f:X\to L$ such that $f^{-1}=\phi$ and compare it with the points-to-primes map $p:X\to L$, showing that $f=p$ if and only if $X$ is preregular, and that a sober space $X$ is Hausdorff if and only if $X$ is $T_1$ and $f(X)\subseteq\spectrum(L)$.
\vspace{0.2cm}\\ 2020 \textit{Mathematics Subject
Classification}: 18F70, 54D10, 54E99, 54H12

\end{abstract}

\section{Introduction}\label{sec:intro}

Let $L$ be a locale equipped with a topology $\topology(L)$. The latter is another locale, and the question of how it relates to $L$ may be relevant in some contexts. For instance, examples of topological quantales, and, in particular, topological locales, occur as spaces of physical measurements in~\cites{fop,msnew}, and in the model of consciousness of~\cite{qualia} a related interpretation is offered of some topological quantales as being spaces of \emph{qualia} (intrinsic qualities of moments of subjective experience---cf.\ \cite{MindWorldOrder}) whose open sets are the \emph{pure concepts}, meaning---borrowing ideas from computer science~\cite{topologyvialogic}---the portions of finite information that can be recorded and communicated between conscious beings by finite means. Then, in this context, a homomorphism of locales
\[
\phi:\topology(L)\to L
\]
(by which is meant a \emph{frame} homomorphism)
may describe a mechanism through which a conscious being evokes concepts, an instance of what psychologists refer to as semantic memory~\cite{Kumar2021}. For instance, if $\mathcal U\in\topology(L)$ is the concept ``color red'' stored in Alice's synapses, $\phi(\mathcal U)$ would be the subjective experience that occurs when Alice evokes that concept.

A specific example discussed in~\cite{qualia} is the following, which defines a homomorphism whenever $\topology(L)$ is contained in the Alexandrov topology of $L$, as we will see below:
\begin{equation}\label{phi}
\phi(\mathcal U)=\V\{a\in L\st\exists_{b\in \mathcal U}\ a\wedge b=0\}.
\end{equation}

If, as in~\cite{qualia}, $L$ is a locally compact locale then, with the Scott topology, it is a sober space. Hence, identifying $L$ with the topology $\topology(X)$ of a locally compact sober space $X$, the homomorphisms $\phi:\topology(L)\to L$ are in bijection with the continuous maps
\[
f:X\to L.
\]
These represent the points of $X$ continuously by open sets of $X$. Another representation of points by open sets, but not necessarily continuous, is the mapping of points to primes
\[
p:X\to L,
\]
which for all $x\in X$ is given by $p(x)=X\setminus\overline{\{x\}}$.

The main purpose of this paper is to compare $f$ and $p$ when $\phi$ is given by \eqref{phi}, in particular because the situations where $f=p$ are especially relevant in~\cite{qualia}. But, while local compactness is a given in the latter, in this paper it is not needed, and we prove more general results concerning preregular spaces, not requiring sobriety of $L$ when equipped with the Scott topology, but still allowing a unique continuous map $f$ to be defined such that $f^{-1}=\phi$, and leading to the following conclusion:

\begin{theorem*}
A $T_0$ space $X$ is Hausdorff if and only if $f=p$.
\end{theorem*}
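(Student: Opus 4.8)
The plan is to reduce the statement to the equivalence ``$f=p$ if and only if $X$ is preregular'' (announced in the abstract) together with the elementary fact that a $T_0$ preregular space is Hausdorff. Since it is the first equivalence that carries the content, I would prove it directly, via explicit formulas for $f$ and $p$, rather than merely cite it.

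First I would unwind $\phi$. Writing $b^{\ast}=\bigcup\{a\in\topology(X)\st a\cap b=\emptyset\}$ for the pseudocomplement of an open set $b$, one reads off from \eqref{phi} that $\phi(\mathcal U)=\bigcup_{b\in\mathcal U}b^{\ast}$ for every Scott-open $\mathcal U\subseteq L$; hence $x\in\phi(\mathcal U)$ iff $x\in b^{\ast}$ for some $b\in\mathcal U$, iff $x\notin\overline b$ for some $b\in\mathcal U$. Next I would pin down $f$ and $p$. The set $J_x=\{b\in L\st x\notin\overline b\}$ is an ideal of $L$: it is down-closed, and closed under finite joins because $\overline{b_1\cup b_2}=\overline{b_1}\cup\overline{b_2}$. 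Consequently, using directedness of $J_x$ and Scott-openness of $\mathcal U$, for every Scott-open $\mathcal U$ one gets $\bigvee J_x\in\mathcal U\iff J_x\cap\mathcal U\neq\emptyset\iff x\in\phi(\mathcal U)$. So the map $X\to L$ sending $x$ to $\bigvee J_x$ is continuous with inverse-image map $\phi$, and hence, by the uniqueness noted in the abstract, $f(x)=\bigvee\{b\in L\st x\notin\overline b\}$. Likewise $p(x)=X\setminus\overline{\{x\}}=\bigvee\{b\in L\st x\notin b\}$, and since $x\notin\overline b$ implies $x\notin b$ we always have $f(x)\le p(x)$.

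Then I would prove $f=p\iff X$ preregular. As $J_x\subseteq\{b\in L\st x\notin b\}$, the equality $f=p$ holds iff for every $x$ and every open $b$ with $x\notin b$ we have $b\le\bigvee J_x$; spelling this out, it asserts that whenever $y\notin\overline{\{x\}}$ there is an open $c$ with $y\in c$ and $x\notin\overline c$, that is, disjoint open sets separating $x$ and $y$. Since separation by disjoint opens is symmetric in $x,y$, and ``$x$ and $y$ are topologically distinguishable'' means ``$y\notin\overline{\{x\}}$ or $x\notin\overline{\{y\}}$'', this is exactly preregularity of $X$. Finally, if $X$ is $T_0$ then distinct points are topologically distinguishable, so preregularity becomes the statement that distinct points have disjoint neighbourhoods, i.e.\ the Hausdorff property; together with the previous equivalence this proves the theorem.

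The step I expect to be the main obstacle is the explicit description of $f$: one must check that $J_x$ is directed---indeed an ideal---before Scott-openness of $\mathcal U$ can be used, and one must keep in mind that $\bigvee J_x$ need not belong to $J_x$, the join being taken over the whole ideal rather than realised by a single open set. This is precisely the phenomenon that allows $f(x)$ to drop strictly below $p(x)$ when $X$ is not preregular, and it is what makes the comparison of $f$ and $p$ nontrivial.
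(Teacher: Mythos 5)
Your proposal is correct and follows essentially the same route as the paper: your $J_x$ is the paper's $\mathcal D_{\mathcal N_x}$, your explicit formula $f(x)=\bigvee\{b\mid x\notin\overline b\}$ is the paper's definition $f(x)=\phi(\mathcal N_x)$, your characterization of $y\in f(x)$ via disjoint neighbourhoods is the paper's equivalence \eqref{eqfx}, and the reduction ``$f=p\iff$ preregular'' followed by ``$T_0$ + preregular $=$ Hausdorff'' is exactly the paper's Theorem~\ref{lem:prereg} and Corollary~\ref{cor:T0feqp}. No gaps.
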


In addition, we introduce the notion of \emph{Scott locale}, by which is meant a locale $L$ whose prime spectrum inclusion $\spectrum(L)\to L$ is continuous with respect to the Scott topology on $L$. We will see that this makes $\spectrum(L)$ a $T_1$ space, and that examples of Scott locales are given by preregular locales---a weakening of regular locales.

These results provide a small addition to the extensive body of work on separation axioms for locales, in particular to the problem of finding localic analogues of Hausdorff spaces~\cite{picadopultr-separation}.

For general definitions and facts concerning sober spaces and locales we refer to~\cites{picadopultr,stonespaces}.

\section{The homomorphism $\phi$}

Let $L$ be an arbitrary locale equipped with a topology $\topology(L)$, and let $\phi:\topology(L)\to L$ be the mapping defined in \eqref{phi}.
Define the following notation, for each $\mathcal U\in\topology(L)$:
\[
\mathcal D_{\mathcal U} := \bigl\{ a\in L\mid \exists_{b\in \mathcal U}\ a\wedge b=0\bigr\}.
\]
Then we have
\[
\phi(\mathcal U) = \V\mathcal D_{\mathcal U}.
\]

As stated in the introduction, the conclusion that $\phi$ is a homomorphism of locales holds under very general assumptions:

\begin{lemma}
If $\topology(L)$ is contained in the Alexandrov topology of $L$ then $\phi:\topology(L)\to L$ is a homomorphism of locales.
\end{lemma}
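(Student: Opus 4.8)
The plan is to verify directly that $\phi$ preserves finite meets (including the empty meet, i.e.\ the top element) and arbitrary joins, using only the hypothesis that every $\mathcal U\in\topology(L)$ is an up-set in $L$, i.e.\ closed under the specialization/way-below order implicit in the Alexandrov topology; concretely, $a\in\mathcal U$ and $a\le c$ imply $c\in\mathcal U$. First I would record the elementary closure properties of the sets $\mathcal D_{\mathcal U}$: each $\mathcal D_{\mathcal U}$ is a down-set in $L$ (if $a\wedge b=0$ and $a'\le a$ then $a'\wedge b=0$), so $\phi(\mathcal U)=\V\mathcal D_{\mathcal U}$ is the largest element whose meet with something in $\mathcal U$ vanishes only in a limiting sense — but note $\phi(\mathcal U)$ itself need not lie in $\mathcal D_{\mathcal U}$ in general. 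The Alexandrov hypothesis will be what lets us control joins.

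For the top element: $\topology(L)$ has top element $L$ itself (the whole frame, i.e.\ the full Alexandrov-type open set), and $\mathcal D_L=\{a\in L\mid \exists_{b\in L}\,a\wedge b=0\}$; taking $b=0\in L$ shows every $a\in L$ qualifies, so $\mathcal D_L=L$ and $\phi(L)=1$. For binary meets I would show $\phi(\mathcal U\cap\mathcal V)=\phi(\mathcal U)\wedge\phi(\mathcal V)$. The inclusion $\mathcal D_{\mathcal U\cap\mathcal V}\subseteq\mathcal D_{\mathcal U}\cap\mathcal D_{\mathcal V}$ is immediate, giving $\le$. For $\ge$, given $a\in\mathcal D_{\mathcal U}$ and $a'\in\mathcal D_{\mathcal V}$ with witnesses $b\in\mathcal U$, $b'\in\mathcal V$ satisfying $a\wedge b=0=a'\wedge b'$, I want $a\wedge a'\in\mathcal D_{\mathcal U\cap\mathcal V}$: the natural witness is $b\vee b'$, since $(a\wedge a')\wedge(b\vee b') = (a\wedge a'\wedge b)\vee(a\wedge a'\wedge b') = 0$ using the frame distributive law and the fact that $a\wedge b=0$ kills the first term and $a'\wedge b'=0$ the second. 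But I need $b\vee b'\in\mathcal U\cap\mathcal V$, and this is exactly where the up-set hypothesis enters: $b\le b\vee b'$ with $b\in\mathcal U$ forces $b\vee b'\in\mathcal U$, and symmetrically $b\vee b'\in\mathcal V$. Hence $a\wedge a'\in\mathcal D_{\mathcal U\cap\mathcal V}$, and joining over all such $a,a'$ (and using that in a frame $(\V S)\wedge(\V T)=\V\{s\wedge t\mid s\in S, t\in T\}$) yields $\phi(\mathcal U)\wedge\phi(\mathcal V)\le\phi(\mathcal U\cap\mathcal V)$.

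For arbitrary joins I would show $\phi\bigl(\bigcup_i\mathcal U_i\bigr)=\V_i\phi(\mathcal U_i)$. Here $\bigcup_i\mathcal U_i$ is the join in $\topology(L)$, which for up-sets is just the set-theoretic union, so $\mathcal D_{\bigcup_i\mathcal U_i}=\bigcup_i\mathcal D_{\mathcal U_i}$ directly from the definition (an element $a$ has a witness in the union iff it has a witness in some $\mathcal U_i$); taking joins over $L$ gives $\phi\bigl(\bigcup_i\mathcal U_i\bigr)=\V\bigcup_i\mathcal D_{\mathcal U_i}=\V_i\V\mathcal D_{\mathcal U_i}=\V_i\phi(\mathcal U_i)$. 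This step is essentially formal once one notes that the join in the subframe $\topology(L)$ of the Alexandrov topology agrees with set-theoretic union. The main obstacle is really the $\ge$ direction of the binary-meet case: it is the only place where one must be careful that the witness $b\vee b'$ stays inside the open sets, and it is precisely the Alexandrov (up-set) hypothesis that makes this work — without it the lemma fails. I would close by remarking that preservation of the empty join ($\phi(\emptyset)=\V\emptyset=0$, since $\mathcal D_\emptyset=\emptyset$) is subsumed in the arbitrary-join case, completing the verification that $\phi$ is a frame homomorphism.
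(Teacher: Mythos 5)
Your proposal is correct and follows essentially the same route as the paper: the top element via the witness $b=0$, joins by the formal identity $\mathcal D_{\bigcup_i\mathcal U_i}=\bigcup_i\mathcal D_{\mathcal U_i}$, and the nontrivial inequality $\phi(\mathcal U)\wedge\phi(\mathcal V)\le\phi(\mathcal U\cap\mathcal V)$ by frame distributivity together with the witness $b\vee b'$, whose membership in $\mathcal U\cap\mathcal V$ is exactly where the upper-closedness from the Alexandrov hypothesis is used. No gaps.
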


\begin{proof}
We need to prove the following three conditions:
\begin{enumerate}
\item\label{case1} $\phi(L)=1$.
\item\label{case2} $\phi(\bigcup_i \mathcal U_i)=\V_i\phi(\mathcal U_i)$ for all families $(\mathcal U_i)$ in $\topology(L)$.
\item\label{case3} $\phi(\mathcal U\cap \mathcal V)=\phi(\mathcal U)\wedge\phi(\mathcal V)$ for all $\mathcal U,\mathcal V\in\topology(L)$.
\end{enumerate}

\begin{trivlist}
\item Proof of \eqref{case1}. Immediate because $1\in L$ and $1\wedge 0=0$.

\item Proof of \eqref{case2}. For any family of open sets $(\mathcal U_i)$ we have
\[
\phi\bigl(\bigcup_i \mathcal U_i\bigr) = \V\mathcal D_{\bigcup_i \mathcal U_i}
=\V\bigcup_i\{a\in L\st\exists_{x\in \mathcal U_i}\ a\wedge x=0\}=\V_i\phi(\mathcal U_i).
\]

\item Proof of \eqref{case3}. Let $\mathcal U,\mathcal V\in\topology(L)$. Due to \eqref{case2} $\phi$ is monotone, so we have
\[
\phi(\mathcal U\cap \mathcal V)\le \phi(\mathcal U)\wedge\phi(\mathcal V).
\]
Hence, in order to obtain \eqref{case3} only the converse inequality needs to be proved:
\[
\phi(\mathcal U)\wedge\phi(\mathcal V)\le\phi(\mathcal U\cap \mathcal V).
\]
Using locale distributivity we obtain
\[
\phi(\mathcal U)\wedge\phi(\mathcal V)=\V\{a\wedge b\st\exists_{x\in \mathcal U}\exists_{y\in \mathcal V}\ a\wedge x=b\wedge y=0\}.
\]
For each $a,b\in L$ and each $x\in \mathcal U$ and $y\in \mathcal V$ such that $a\wedge x=b\wedge y=0$ we have
\[
a\wedge b\wedge(x\vee y) = (a\wedge b\wedge x)\vee (a\wedge b\wedge y)=0,
\]
and $x\vee y\in \mathcal U\cap \mathcal V$ because both $\mathcal U$ and $\mathcal V$ are upper-closed, so
$a\wedge b\le\phi(\mathcal U\cap \mathcal V)$. Therefore $\phi(\mathcal U)\wedge\phi(\mathcal V)$, which is the supremum of all such elements $a\wedge b$, is less or equal to $\phi(\mathcal U\cap \mathcal V)$. \qedhere
\end{trivlist}
\end{proof}

Under the same assumptions, $\mathcal D_{\mathcal U}$ and $\phi(\mathcal U)$ can equivalently be defined using the pseudo-complement operation on $L$, which for each $a\in L$ will be denoted by
\[
\neg a = \V\{b\in L\mid a\wedge b=0\}:
\]

\begin{lemma}\label{remark}
If $\topology(L)$ is contained in the Alexandrov topology then, for all $\mathcal U\in\topology(L)$,
\[
\mathcal D_{\mathcal U} = \bigl\{ a\in L\mid \neg a\in\mathcal U\bigr\}.
\]
Moreover, $\mathcal D_{\mathcal U}$ is downwards closed in $L$, and $\mathcal D_{\mathcal U}$ is an ideal if $\mathcal U$ is a filter.
\end{lemma}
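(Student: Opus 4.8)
The plan is to establish the set equality by two inclusions and then read off the order-theoretic properties, relying only on two elementary facts about the frame $L$: that $a\wedge\neg a=0$ for every $a\in L$ (this is where the infinitary distributive law of $L$ enters, via $a\wedge\neg a=a\wedge\V\{c\st a\wedge c=0\}=\V\{a\wedge c\st a\wedge c=0\}=0$), and that $\neg$ is order-reversing. The only property of $\topology(L)$ that is used is that its members are upper-closed subsets of $L$, which is exactly what containment in the Alexandrov topology means.

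For $\mathcal D_{\mathcal U}\subseteq\{a\st\neg a\in\mathcal U\}$, I would take $a\in\mathcal D_{\mathcal U}$, choose $b\in\mathcal U$ with $a\wedge b=0$, note $b\le\neg a$ directly from the definition of the pseudo-complement, and conclude $\neg a\in\mathcal U$ since $\mathcal U$ is upper-closed. For the reverse inclusion, given $a$ with $\neg a\in\mathcal U$, the element $b=\neg a$ itself witnesses membership of $a$ in $\mathcal D_{\mathcal U}$, because $a\wedge\neg a=0$; this direction needs only that $L$ is a frame, not the hypothesis on $\topology(L)$.

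Downward closure is then immediate in either description: if $a\in\mathcal D_{\mathcal U}$ and $a'\le a$, the same $b$ works since $a'\wedge b\le a\wedge b=0$ (equivalently, $\neg a\le\neg a'$ and $\mathcal U$ upper-closed give $\neg a'\in\mathcal U$). Finally, when $\mathcal U$ is a filter it is in particular nonempty, so $0\in\mathcal D_{\mathcal U}$; and for $a_1,a_2\in\mathcal D_{\mathcal U}$ with witnesses $b_1,b_2\in\mathcal U$, the meet $b_1\wedge b_2$ lies in $\mathcal U$ and satisfies $(a_1\vee a_2)\wedge(b_1\wedge b_2)=(a_1\wedge b_1\wedge b_2)\vee(a_2\wedge b_1\wedge b_2)=0$, so $a_1\vee a_2\in\mathcal D_{\mathcal U}$; together with downward closure this is exactly the statement that $\mathcal D_{\mathcal U}$ is an ideal. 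I do not expect a real obstacle here; the one thing to watch is the bookkeeping of hypotheses, namely that the inclusion ``$\subseteq$'' and the ideal claim use upper-closedness (resp.\ the filter property) of $\mathcal U$, whereas ``$\supseteq$'' rests solely on the identity $a\wedge\neg a=0$.
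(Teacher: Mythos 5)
Your proof is correct and follows essentially the same route as the paper's: the same witness $b=\neg a$ for one inclusion, upper-closedness of $\mathcal U$ together with $b\le\neg a$ for the other, and the filter's closure under binary meets for the ideal claim (the paper phrases this last step via $\neg(a\vee b)=\neg a\wedge\neg b\in\mathcal U$ rather than via the witness $b_1\wedge b_2$, but the content is identical). Your bookkeeping of which hypotheses are used where also matches the paper's parenthetical remarks.
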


\begin{proof}
If $\neg a\in\mathcal U$ then, making $b=\neg a$, we obtain $a\wedge b=0$ and $b\in\mathcal U$ (this does not require knowing that $\mathcal U$ is upwards closed). Conversely, if there is $b\in L$ such that $a\wedge b=0$ and $b\in\mathcal U$, then, since $\mathcal U$ is upwards closed and the condition $a\wedge b=0$ is equivalent to $b\le\neg a$, we obtain $\neg a\in\mathcal U$.

For the second part, let $a\le b\in\mathcal D_{\mathcal U}$. Then $\neg b\in\mathcal U$ and $\neg b\le\neg a$, so $\neg a\in\mathcal U$ because $\mathcal U$ is upwards closed; that is, $a\in\mathcal D_{\mathcal U}$. Now assume that $\mathcal U$ is a filter. Then $\mathcal U\neq\emptyset$, so $1\in\mathcal U$ and therefore $0\in \mathcal D_{\mathcal U}$, showing that $\mathcal D_{\mathcal U}\neq\emptyset$. Finally, if $a,b\in \mathcal D_{\mathcal U}$ then $\neg a\in\mathcal U$ and $\neg b\in\mathcal U$, so
\[
\neg(a\vee b)=\neg a\wedge\neg b\in\mathcal U,
\]
and we conclude that $a\vee b\in\mathcal D_{\mathcal U}$.
\end{proof}

We list two more alternative definitions of $\phi$, although they will not be needed in the paper:

\begin{remark}
For each $\mathcal U\in \topology(L)$ define the set
\[
\mathcal B_{\mathcal U} := \bigl\{ \neg b\mid b\in\mathcal U\bigr\} = \bigl\{a\in L\mid \exists_{b\in\mathcal U}\ a=\neg b\bigr\}.
\]
Since the condition $a=\neg b$ is stronger than $a\le \neg b$, which is equivalent to $a\wedge b=0$, we obtain $\mathcal B_{\mathcal U}\subseteq\mathcal D_{\mathcal U}$, and thus
\[
\V\mathcal B_{\mathcal U}\le\phi(\mathcal U).
\]
Conversely, if $a\in\mathcal D_{\mathcal U}$ we have $a\wedge b=0$ for some $b\in\mathcal U$. Hence, for each $a\in\mathcal D_{\mathcal U}$ there is $b\in L$ such that $a\le\neg b\in\mathcal B_{\mathcal U}$, and thus
\[
\phi(\mathcal U)\le\V\mathcal B_{\mathcal U}.
\]
Therefore $\phi$ can be defined as in \S\ref{remark} but swapping $a$ and $\neg a$:
\[
\phi(\mathcal U) = \V\mathcal B_{\mathcal U}=\{\neg b\mid b\in\mathcal U\}.
\]
Notice that this holds regardless of any conditions on $\topology(L)$.

Still another equivalent definition is derived directly from \S\ref{remark}, where $L_{\neg\neg}$ is the Booleanization of $L$---\ie, the frame quotient of $L$ by the double negation nucleus $\neg\neg$:
\[
\phi(\mathcal U)=\V\{a\in L_{\neg\neg}\st \neg a\in\mathcal U\}.
\]
Clearly, the right hand side is less or equal to $\phi(\mathcal U)$ because $\{a\in L_{\neg\neg}\st\neg a\in\mathcal U\}\subseteq\mathcal D_{\mathcal U}$. For the converse inequality, for each $a\in\mathcal D_{\mathcal U}$ we have $a\le\neg\neg a\in L_{\neg\neg}$ and $\neg\neg\neg a=\neg a\in\mathcal U$, so $\phi(\mathcal U)\le\V\{a\in L_{\neg\neg}\st \neg a\in\mathcal U\}$.
\end{remark}

\section{preregular spaces}

Now let $X$ be an arbitrary topological space, and let $L:=\topology(X)$ be its locale of open sets, whose elements now will be denoted by $U,V$. Let also $\topology(L)$ denote the Scott topology on $L$. For each $U\in L$ the pseudo-complement $\neg U$ is the exterior of $U$:
\[
\neg U=\ext U:= X\setminus\overline U.
\]
Hence, by \S\ref{remark}, the map $\phi:\topology(L)\to L$ of \eqref{phi} is given by
\begin{eqnarray*}
\phi(\mathcal U) &=& \bigcup\mathcal D_{\mathcal U}\\
&=& \bigcup\bigl\{ U\in L\mid \exists_{V\in\mathcal U}\ U\cap V=\emptyset\bigr\}\\
&=& \bigcup\bigl\{U\in L\mid \ext U\in\mathcal U\bigr\}.
\end{eqnarray*}

For each $x\in X$ write $\mathcal N_x$ for the set of all the open neighborhoods of $x$. This is an open set of the Scott topology of $L$, so we can define a map $f:X\to L$ by
\[
f(x) = \phi(\mathcal N_x).
\]
The following equivalences are immediate:
\begin{equation}\label{eqfx}
y\in f(x)\iff\bigl(\exists_{U\in\mathcal N_x}\exists_{V\in\mathcal N_y}\ U\cap V=\emptyset\bigr)\iff x\in f(y).
\end{equation}
The set $\mathcal N_x$ is also a filter, so $\mathcal D_{\mathcal N_x}$ is a directed set, by \eqref{remark}.

\begin{lemma}
For all $\mathcal U\in\topology(L)$ we have $f^{-1}(\mathcal U)=\phi(\mathcal U)$, so $f$ is a continuous map.
\end{lemma}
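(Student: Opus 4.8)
The plan is to prove the set‑theoretic identity $f^{-1}(\mathcal U)=\phi(\mathcal U)$ directly, by characterizing membership on both sides. Continuity of $f$ is then automatic: $\phi(\mathcal U)$ belongs to $L=\topology(X)$, so it is an open set of $X$, and the assertion ``$f^{-1}=\phi$'' just records that the frame homomorphism induced by the continuous map $f$ coincides with the homomorphism $\phi$ already in hand. Thus it suffices to show, for each $x\in X$, that $f(x)\in\mathcal U$ if and only if $x\in\phi(\mathcal U)$, keeping in mind that a Scott‑open set $\mathcal U$ is upper‑closed and, crucially, inaccessible by directed joins.

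For the inclusion $\phi(\mathcal U)\subseteq f^{-1}(\mathcal U)$, suppose $x\in\phi(\mathcal U)$, so that there is $U\in\mathcal N_x$ with $\ext U\in\mathcal U$. The point is that $\ext U\subseteq f(x)$: given $y\in\ext U$, the neighborhoods $U\in\mathcal N_x$ and $\ext U\in\mathcal N_y$ are disjoint, so $y\in f(x)$ by \eqref{eqfx}. Since $\mathcal U$ is upper‑closed, from $\ext U\in\mathcal U$ and $\ext U\le f(x)$ we conclude $f(x)\in\mathcal U$, that is, $x\in f^{-1}(\mathcal U)$. Note that only upper‑closedness of $\mathcal U$ is used here.

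For the reverse inclusion $f^{-1}(\mathcal U)\subseteq\phi(\mathcal U)$, suppose $f(x)\in\mathcal U$; this is where Scott‑openness, rather than mere upper‑closedness, is needed. Since $\mathcal N_x$ is a filter, $\mathcal D_{\mathcal N_x}$ is directed by Lemma~\ref{remark}, and $f(x)=\phi(\mathcal N_x)=\bigcup\mathcal D_{\mathcal N_x}$ is a directed union lying in the Scott‑open set $\mathcal U$; hence some $W\in\mathcal D_{\mathcal N_x}$ already lies in $\mathcal U$. Unwinding the definition of $\mathcal D_{\mathcal N_x}$, there is $V\in\mathcal N_x$ with $W\cap V=\emptyset$, equivalently $W\subseteq\ext V$; then $V$ is an open neighborhood of $x$ with $\ext V\supseteq W\in\mathcal U$, so $\ext V\in\mathcal U$ by upper‑closedness, giving $x\in\phi(\mathcal U)$. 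I expect the only real subtlety to be bookkeeping: making sure the directedness of $\mathcal D_{\mathcal N_x}$ is available exactly at the moment Scott‑openness is invoked in this second inclusion — which is precisely the purpose of the remark after \eqref{eqfx} that $\mathcal N_x$ is a filter, hence $\mathcal D_{\mathcal N_x}$ directed. The remaining manipulations are routine facts about exteriors and the disjointness condition defining $\mathcal D_{\mathcal U}$.
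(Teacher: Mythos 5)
Your proof is correct and rests on exactly the same key idea as the paper's: $f(x)=\bigcup\mathcal D_{\mathcal N_x}$ is a directed union, so Scott-openness of $\mathcal U$ lets you extract a single $W\in\mathcal D_{\mathcal N_x}\cap\mathcal U$, while the converse direction needs only upper-closedness. The paper merely packages this as one chain of equivalences rather than two inclusions, and works with the disjointness condition directly instead of the $\ext$ reformulation; the content is the same.
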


\begin{proof}
The proof follows from the following straightforward derivation, which uses the fact that $\mathcal D_{\mathcal N_x}$ is directed and $\mathcal U$ is Scott open:
\begin{eqnarray*}
x\in f^{-1}(\mathcal U) &\iff& f(x)\in\mathcal U
\iff \phi(\mathcal N_x)\in\mathcal U\\
&\iff& \bigcup\mathcal D_{\mathcal N_x}\in\mathcal U \iff \exists_{U\in\mathcal D_{\mathcal N_x}}\ U\in\mathcal U\\
&\iff& \exists_{U\in\mathcal U}\exists_{V\in\mathcal N_x}\ U\cap V=\emptyset\\
&\iff& x\in\bigcup\{V\in L\st \exists_{U\in\mathcal U}\ U\cap V=\emptyset\}\\
&\iff& x\in\bigcup\mathcal D_{\mathcal U}
\iff x\in\phi(\mathcal U). \qedhere
\end{eqnarray*}
\end{proof}

Now let us compare $f$ to the map $p:X\to L$ that sends each $x\in X$ to the corresponding prime element
\[
p(x) = X\setminus\overline{\{x\}}.
\]
\begin{lemma}\label{lem:flep}
For all $x\in X$ we have $f(x)\subseteq p(x)$.
\end{lemma}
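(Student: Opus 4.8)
The plan is to unwind the definition of $f(x)$ via the characterisation in~\eqref{eqfx} and show that membership in $f(x)$ forces membership in $p(x) = X \setminus \overline{\{x\}}$. Concretely, I would fix $x \in X$ and let $y \in f(x)$; by~\eqref{eqfx} there exist $U \in \mathcal N_x$ and $V \in \mathcal N_y$ with $U \cap V = \emptyset$. I need to conclude $y \notin \overline{\{x\}}$, i.e.\ that $y$ has an open neighbourhood missing $x$. The neighbourhood $V$ is the obvious candidate: since $U \cap V = \emptyset$ and $x \in U$, we get $x \notin V$, and $V$ is an open set containing $y$. Hence $y \notin \overline{\{x\}}$, that is, $y \in p(x)$.

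So the argument is essentially a one-line chain: $y \in f(x)$ gives disjoint neighbourhoods $U \ni x$ and $V \ni y$, whence $x \notin V \ni y$ and therefore $y \notin \overline{\{x\}}$. I would write it as: suppose $y \in f(x)$; pick $U \in \mathcal N_x$, $V \in \mathcal N_y$ with $U \cap V = \emptyset$; then $x \in U$ forces $x \notin V$, so the open set $V$ witnesses $y \notin \overline{\{x\}}$, i.e.\ $y \in X \setminus \overline{\{x\}} = p(x)$. Since $y$ was arbitrary, $f(x) \subseteq p(x)$.

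I do not anticipate any real obstacle here — the content is entirely in the equivalence~\eqref{eqfx}, which is already established, and the rest is the definition of closure. The only thing to be slightly careful about is the direction of the argument: it is tempting to try to compare the suprema defining $\phi(\mathcal N_x)$ and $p(x)$ directly, but it is cleaner to argue pointwise using~\eqref{eqfx} as above. One could alternatively phrase it as: for $U \in \mathcal D_{\mathcal N_x}$ we have $U \cap V = \emptyset$ for some $V \in \mathcal N_x$, so $U \subseteq \ext V = X \setminus \overline V \subseteq X \setminus \overline{\{x\}} = p(x)$ since $x \in V$; taking the union over all such $U$ gives $f(x) = \phi(\mathcal N_x) = \bigcup \mathcal D_{\mathcal N_x} \subseteq p(x)$. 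Either formulation is short; I would present the latter since it stays in the language of $\phi$ and $\mathcal D_{\mathcal U}$ used throughout this section.
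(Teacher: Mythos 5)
Your proposal is correct and your first formulation is essentially identical to the paper's own proof, which also takes $y\in f(x)$, extracts disjoint $U\in\mathcal N_x$ and $V\in\mathcal N_y$ via \eqref{eqfx}, and notes that $V$ witnesses $y\notin\overline{\{x\}}$. The alternative phrasing via $U\subseteq\ext V\subseteq p(x)$ for $U\in\mathcal D_{\mathcal N_x}$ is also fine and only a cosmetic variant.
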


\begin{proof}
Let $y\in f(x)$. Then by \eqref{eqfx} there is $U\in\mathcal N_y$ such that $x\notin U$; that is, $y\notin\overline{\{x\}}$. Therefore, $y\in p(x)$.
\end{proof}

Two points $x,y\in X$ are topologically indistinguishable if they have the same open neighborhoods: $\mathcal N_x=\mathcal N_y$. This is equivalent to the condition $\overline{\{x\}}=\overline{\{y\}}$ because $z\in\overline{\{x\}}$ if and only if $\mathcal N_z\subseteq\mathcal N_x$.
Recall that a topological space is said to be \emph{preregular}, or $R_1$, if every pair of topologically distinguishable points can be separated by disjoint neighborhoods.
This implies that the specialization order of any preregular space is an equivalence relation, and that any $T_0$ preregular space is Hausdorff.

\begin{theorem}\label{lem:prereg}
A topological space $X$ is preregular if and only if $f=p$.
\end{theorem}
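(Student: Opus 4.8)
The plan is to exploit Lemma~\ref{lem:flep}, which already gives $f(x)\subseteq p(x)$ for every $x\in X$, so that the asserted equality $f=p$ reduces to the single reverse inclusion: for all $x,y\in X$,
\[
y\in p(x)\ \Longrightarrow\ y\in f(x).
\]
By \eqref{eqfx}, the conclusion $y\in f(x)$ is exactly the statement that $x$ and $y$ admit disjoint open neighborhoods, while $y\in p(x)$ unwinds to $y\notin\overline{\{x\}}$. Thus the whole theorem becomes a comparison between ``$y\notin\overline{\{x\}}$'' and ``$x$ and $y$ can be separated by disjoint neighborhoods'', and both implications are short once the definitions are unwound.

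For the direction ``$X$ preregular $\Rightarrow f=p$'', I would start from $y\in p(x)$, i.e.\ $y\notin\overline{\{x\}}$, and unwind the closure to produce an open set $V\in\mathcal N_y$ with $x\notin V$. The key observation is that such a $V$ witnesses $\mathcal N_x\neq\mathcal N_y$, so $x$ and $y$ are topologically distinguishable; preregularity then supplies disjoint $U\in\mathcal N_x$ and $V'\in\mathcal N_y$, and \eqref{eqfx} yields $y\in f(x)$, as required.

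For the converse ``$f=p\Rightarrow X$ preregular'', I would take a pair of topologically distinguishable points $x,y$, so $\mathcal N_x\neq\mathcal N_y$, and, after possibly swapping the roles of $x$ and $y$, choose an open set $V$ with $y\in V$ and $x\notin V$. Then $x\notin V\in\mathcal N_y$ shows $y\notin\overline{\{x\}}$, i.e.\ $y\in p(x)$; by hypothesis $y\in f(x)$, so \eqref{eqfx} produces disjoint neighborhoods of $x$ and $y$, which is precisely what preregularity asks for.

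There is no deep obstacle here; the only points requiring a little care are the bookkeeping with topological (in)distinguishability — in particular the remark in the first direction that the hypothesis $y\in p(x)$ already forces $x$ and $y$ to be distinguishable (otherwise $\overline{\{x\}}=\overline{\{y\}}$ would contain $y$), so that preregularity genuinely applies — and the symmetric alternative in the converse, which is handled by the same argument with $x$ and $y$ interchanged.
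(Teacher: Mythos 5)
Your proof is correct and follows essentially the same route as the paper's: reduce to the inclusion $p(x)\subseteq f(x)$ via Lemma~\ref{lem:flep}, and translate both directions through \eqref{eqfx} and the observation that $y\notin\overline{\{x\}}$ is exactly topological distinguishability (up to the symmetric swap in the converse). No gaps.
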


\begin{proof}
Assume that $X$ is preregular. By the previous lemma we only need to prove that $p(x)\subseteq f(x)$ for all $x\in X$. 
Let $y\in p(x)$. Then $y\notin\overline{\{x\}}$, which means that $x$ and $y$ are topologically distinguishable, so there are $U\in\mathcal N_x$ and $V\in\mathcal N_y$ such that $U\cap V\neq \emptyset$. By \eqref{eqfx}, we conclude $y\in f(x)$.

For the converse implication, assume that $p=f$ and let $x,y\in X$ be topologically distinguishable. Without loss of generality, assume $y\notin\overline{\{x\}}$. Then $y\in p(x)=f(x)$, so by \eqref{eqfx} the points $x$ and $y$ can be separated by disjoint neighborhoods.
\end{proof}

\begin{corollary}\label{cor:T0feqp}
A $T_0$ topological space $X$ is Hausdorff if and only if $f=p$.
\end{corollary}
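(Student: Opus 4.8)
The plan is to derive Corollary~\ref{cor:T0feqp} directly from Theorem~\ref{lem:prereg} by identifying the two separation properties in the $T_0$ setting. The key observation is standard: a $T_0$ space is Hausdorff if and only if it is both $T_0$ and $R_1$ (preregular). Indeed, a Hausdorff space is clearly $T_0$ and $R_1$, since any two distinct points are also topologically distinguishable (by $T_0$) and hence, being Hausdorff, have disjoint neighborhoods. Conversely, if $X$ is $T_0$ and preregular, then any two distinct points are topologically distinguishable (by $T_0$), and so by preregularity they can be separated by disjoint neighborhoods; thus $X$ is Hausdorff. This last implication is in fact already recorded in the text preceding Theorem~\ref{lem:prereg} (``any $T_0$ preregular space is Hausdorff'').

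With that equivalence in hand, the corollary is immediate: assuming $X$ is $T_0$, Theorem~\ref{lem:prereg} gives that $f=p$ is equivalent to $X$ being preregular, which by the above is equivalent to $X$ being Hausdorff. So the proof is essentially one line invoking Theorem~\ref{lem:prereg} together with the elementary fact that ``$T_0$ $+$ $R_1$ $=$ $T_2$''.

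I would write it roughly as follows. Assume $X$ is $T_0$. If $X$ is Hausdorff then it is preregular, so $f=p$ by Theorem~\ref{lem:prereg}. Conversely, if $f=p$ then $X$ is preregular by Theorem~\ref{lem:prereg}, and a $T_0$ preregular space is Hausdorff, as noted above.

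There is no real obstacle here; the only thing to be careful about is making sure the two directions of ``$T_0$ preregular $\Leftrightarrow$ Hausdorff'' are both explicitly available — the forward direction (Hausdorff $\Rightarrow$ $T_0$ and $R_1$) is trivial and the backward direction is already stated in the paper, so the proof can simply cite Theorem~\ref{lem:prereg} and this folklore fact without further elaboration.
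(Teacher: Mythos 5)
Your proposal is correct and matches the paper's (implicit) argument exactly: the corollary is stated without proof precisely because it follows from Theorem~\ref{lem:prereg} combined with the fact, already recorded in the text preceding that theorem, that a $T_0$ preregular space is Hausdorff. Nothing further is needed.
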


\section{Scott locales}

In this section we assume that $L$ is an arbitrary locale, equipped with the Scott topology $\topology(L)$. Let us denote by $X:=\spectrum(L)$ the spectrum of prime elements of $L$. The open sets of the spectrum are of the form $\mathcal U_a$ where for each $a\in L$
\[
\mathcal U_a=\{x\in X\st a\nleq x\}.
\]
Now the points-to-primes map is the inclusion map $p:X\to L$. We say that $L$ is a \emph{Scott locale} if $p$ is continuous.

\begin{lemma}\label{lem:T1}
If $L$ is a Scott locale then $X$ is a $T_1$ space.
\end{lemma}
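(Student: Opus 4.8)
The plan is to reduce the statement to the assertion that every prime of $L$ is maximal among the primes, and then to derive that assertion from the continuity of $p$ by testing it against one well-chosen Scott-open set.

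First I would pin down the specialization order of $X=\spectrum(L)$. As recalled in the excerpt, for $x,z\in X$ we have $z\in\overline{\{x\}}$ if and only if $\mathcal N_z\subseteq\mathcal N_x$; since the open sets of $X$ are the sets $\mathcal U_a$, this inclusion says exactly that $a\le x\Rightarrow a\le z$ for all $a\in L$, i.e.\ (taking $a=x$) that $x\le z$. Hence $\overline{\{x\}}=\{z\in X\st x\le z\}$, so $\overline{\{x\}}=\{x\}$ precisely when $x$ has no prime strictly above it. Therefore it is enough to show that if $L$ is a Scott locale then every prime of $L$ is maximal among primes.

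Now fix a prime $x$. The set $\{a\in L\st a\le x\}$ is a down-set closed under directed joins, hence Scott-closed, so its complement $\mathcal V_x:=\{a\in L\st a\nleq x\}$ is Scott-open. Since $p$ is continuous, $p^{-1}(\mathcal V_x)=\{z\in X\st z\nleq x\}$ is open in $X$, and as $\bigcup_i\mathcal U_{a_i}=\mathcal U_{\V_i a_i}$ every open set of $X$ has the form $\mathcal U_a$; thus there is $a\in L$ with
\[
\{z\in X\st z\nleq x\}=\mathcal U_a=\{z\in X\st a\nleq z\},
\]
that is, $z\le x\iff a\le z$ for every prime $z$.

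Finally, taking $z=x$ gives $a\le x$. If there were a prime $z$ with $x\le z$ and $z\ne x$, then $z\nleq x$ by antisymmetry, so $a\nleq z$ by the displayed equivalence; but $a\le x\le z$ forces $a\le z$, a contradiction. Hence $x$ is maximal among primes, and since $x$ was an arbitrary prime, $\overline{\{w\}}=\{w\}$ for every $w\in X$, i.e.\ $X$ is $T_1$. I expect the only delicate point to be spotting the right test set $\mathcal V_x$: once one uses the Scott-open complement of the principal down-set of $x$, the continuity of $p$ does all the work, and the remaining ingredients (the shape of the open sets of $X$ and the specialization order) are routine.
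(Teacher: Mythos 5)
Your proof is correct, and at heart it is the same argument as the paper's: the paper observes that continuity of $p$ forces it to preserve specialization orders, and since the specialization order of the Scott topology is the locale order (witnessed by exactly your Scott-open set $\mathcal V_x$, the complement of the principal down-set of $x$) while that of $\spectrum(L)$ is its opposite, any two comparable primes coincide. Your version simply unwinds this concretely; the detour through writing $p^{-1}(\mathcal V_x)$ as some $\mathcal U_a$ is harmless but unnecessary---once you know $\{z\in X\st z\nleq x\}$ is open and that $x\le z$ implies $z\in\overline{\{x\}}$, a prime $z\neq x$ above $x$ would lie in that open set, which would then have to contain $x$ as well, giving $x\nleq x$, a contradiction.
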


\begin{proof}
Let $x,y\in X$ be such that $x\sqsubseteq y$ in the specialization order of the topology of $X$. The continuity of the embedding implies that we have $x\sqsubseteq y$ also with respect to the topology of $L$; that is, we have $x\le y$ in the locale order. However, the condition $x\sqsubseteq y$ in $X$ is equivalent to $y\le x$, and therefore $x=y$.
\end{proof}

In particular, from this lemma it follows that the map $p$ in section~\ref{sec:intro} is not necessarily continuous, since locally compact sober spaces are not necessarily $T_1$.

Let us find a sufficient condition for a locale to be a Scott locale.
A weakening of the definition of regularity for locales is obtained as follows: we say that a locale $L$ is \emph{preregular} if the condition
\begin{equation}\label{R1}
x = \V_{a \wi x} a
\end{equation}
holds for all $x\in X$, where $\wi$ is the \emph{well inside} relation~\cite{stonespaces}*{III-1.1}. It is easy to see that if this condition holds then $X$ is a preregular space. Since the spectrum of a locale is always $T_0$, this implies that $X$ is Hausdorff. Conversely, if $Y$ is a preregular space (in particular, if it is Hausdorff) the locale $\topology(Y)$ is preregular. Hence, the adjunction between $\Top$ and $\Loc$ restricts to an adjunction between the full subcategories of preregular topological spaces and preregular locales, and it further restricts to an adjunction between the category of Hausdorff spaces and that of preregular locales.

\begin{theorem}\label{1stthm}
If $L$ is preregular then it is a Scott locale.
\end{theorem}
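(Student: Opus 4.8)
The plan is to verify continuity of $p$ directly: for every Scott-open set $\mathcal U\subseteq L$ I will produce an element $b\in L$ with $p^{-1}(\mathcal U)=\mathcal U_b$. Since the open sets of $X:=\spectrum(L)$ are precisely the sets $\mathcal U_a$ ($a\in L$), this suffices. The element to use is
\[
b:=\bigwedge\{x\in X\mid x\notin\mathcal U\}
\]
(the meet taken in $L$, so $b=1$ when every prime lies in $\mathcal U$). Since $p$ is the inclusion, $p^{-1}(\mathcal U)=\mathcal U_b$ amounts to the equivalence $x\in\mathcal U\iff b\nleq x$ for every $x\in X$. One direction needs no hypothesis: if $x$ is prime and $b\nleq x$, then $x$ cannot be one of the primes over which the meet defining $b$ is taken, so $x\in\mathcal U$; thus $\mathcal U_b\subseteq p^{-1}(\mathcal U)$.

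For the converse, suppose $x\in X$, $x\in\mathcal U$, and, aiming at a contradiction, $b\le x$. This is where preregularity enters. The set $\{a\in L\mid a\wi x\}$ is directed: it contains $0$, it is downward closed, and it is closed under binary joins since $\neg(a\vee a')\vee x=(\neg a\vee x)\wedge(\neg a'\vee x)$; moreover its join is $x$ by \eqref{R1}. As $\mathcal U$ is Scott-open and contains $x=\V_{a\wi x}a$, some $a_0\wi x$ lies in $\mathcal U$; in particular $\neg a_0\vee x=1$.

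The crux is that $\neg a_0\le y$ for every prime $y\notin\mathcal U$. Indeed $\neg a_0=\V\{c\in L\mid c\wedge a_0=0\}$, so it is enough to show $c\le y$ whenever $c\wedge a_0=0$; but then $c\wedge a_0=0\le y$ and, $y$ being prime, either $c\le y$ or $a_0\le y$, the latter being impossible because $a_0\in\mathcal U$, $\mathcal U$ is upward closed, and $y\notin\mathcal U$. Taking the meet over all such $y$ gives $\neg a_0\le b\le x$, hence $x=\neg a_0\vee x=1$, contradicting that $x$ is prime. Therefore $b\nleq x$, i.e.\ $x\in\mathcal U_b$, which completes the proof that $p^{-1}(\mathcal U)=\mathcal U_b$ is open, so $p$ is continuous.

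I expect the only real obstacle to be identifying $b$ and seeing how the three ingredients combine: Scott-openness supplies a witness $a_0$ well inside $x$ that still lies in $\mathcal U$; primeness of the primes outside $\mathcal U$ forces $\neg a_0$ below all of them, hence below $b$; and the well-inside relation $\neg a_0\vee x=1$ then collapses $x$ to $1$. The directedness of $\{a\mid a\wi x\}$ and the description of the open sets of $\spectrum(L)$ as the $\mathcal U_a$ are the routine facts underpinning the argument.
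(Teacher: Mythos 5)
Your proof is correct and rests on exactly the same three ingredients as the paper's: directedness of $\{a\mid a\wi x\}$ plus Scott-openness to extract $a_0\wi x$ with $a_0\in\mathcal U$, primeness (and upward-closure of $\mathcal U$) to force $\neg a_0$ below every prime outside $\mathcal U$, and $\neg a_0\vee x=1$ to rule out $\neg a_0\le x$. The only difference is packaging: the paper verifies openness locally, showing $x\in\mathcal U_{\neg a_0}\subseteq p^{-1}(\mathcal U)$ for each prime $x\in\mathcal U$, whereas you exhibit a single global element $b$ with $p^{-1}(\mathcal U)=\mathcal U_b$ and argue by contradiction; both are sound.
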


\begin{proof}
Let $L$ be preregular.
Let $x\in \mathcal U$ for some $x\in X$ and $\mathcal U\in\topology(L)$. Let $\mathcal D\subseteq L$ be the directed set of elements well inside $x$. Then $\V \mathcal D=x$ and there is $a\in \mathcal D$ such that $a\in \mathcal U$. Then $\neg a\vee x=1$, and thus $\neg a\nleq x$ because $x\neq 1$ (since $x$ is prime); that is, $x\in \mathcal U_{\neg a}$. Let $y\in \mathcal U_{\neg a}$, \ie, $\neg a\nleq y$. Then, since $y$ is prime and $\neg a\wedge a=0\le y$, we must have $a\le y$, and thus $y\in \mathcal U$. Hence, $x\in \mathcal U_{\neg a}\subseteq \mathcal U$, showing that $p:X\to L$ is continuous.
\end{proof}

We can bring together the results of this section and the previous one by focusing on a sober space $X$ whose topology $L=\topology(X)$ is equipped with the Scott topology $\topology(L)$, where $f:X\to L$ is defined as before and we write $p:X\to L$ for the points-to-primes map.

\begin{theorem}
Let $X$ be a sober space whose topology $L=\topology(X)$ is equipped with the Scott topology $\topology(L)$. The following conditions are equivalent.
\begin{enumerate}
\item\label{main0} $X$ is Hausdorff.
\item\label{main3} $X$ is $T_1$ and $f(x)$ is prime for all $x\in X$.
\end{enumerate}
\end{theorem}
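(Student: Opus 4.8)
The plan is to prove the two implications separately, using the machinery already set up. For $(\ref{main0})\Rightarrow(\ref{main3})$: if $X$ is Hausdorff then it is in particular $T_0$, so by Corollary~\ref{cor:T0feqp} we have $f=p$. Since $X$ is sober, the points-to-primes map $p$ has its image inside $\spectrum(L)$ by the standard correspondence (the complement of the closure of a point is a prime open set), hence $f(x)=p(x)$ is prime for every $x$. Moreover a Hausdorff space is $T_1$, so both conjuncts of $(\ref{main3})$ hold.

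For the converse $(\ref{main3})\Rightarrow(\ref{main0})$, assume $X$ is $T_1$ and $f(x)$ is prime for all $x$. Since $X$ is sober, $p:X\to L$ is a bijection onto $\spectrum(L)$; the $T_1$ hypothesis says exactly that the specialization order on $X$ is trivial, equivalently that $p(x)$ is a maximal element of $\spectrum(L)$ for each $x$ (no $p(x)$ sits strictly below another). The idea is then to exploit Lemma~\ref{lem:flep}, which gives $f(x)\le p(x)$ in $L$ for all $x$. By hypothesis $f(x)$ is prime, so $f(x)=p(f(x))$ under the identification $\spectrum(L)\cong X$; but also $f(x)\le p(x)$, which in the spectrum means $f(x)$ lies in the closure of (the point corresponding to) $p(x)$, i.e.\ $p(x)\sqsubseteq f(x)$ in the specialization order of $X$. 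The $T_1$ property forces $f(x)=p(x)$ for all $x$, hence $f=p$, and Corollary~\ref{cor:T0feqp} (noting $T_1\Rightarrow T_0$) gives that $X$ is Hausdorff.

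The step I expect to be the main obstacle is making precise the comparison ``$f(x)\le p(x)$ in $L$ forces $p(x)\sqsubseteq f(x)$ in $X$'' when $f(x)$ is known to be prime — that is, correctly translating between the locale order on $L$, the specialization order on $\spectrum(L)$, and (via sobriety) the specialization order on $X$. One must be careful that the order on $\spectrum(L)$ inherited from $L$ is the \emph{reverse} of the specialization order of the spectrum's topology (as already used in the proof of Lemma~\ref{lem:T1}): $a\le b$ in $L$ with $a,b$ prime means $b\sqsubseteq a$ in $\spectrum(L)$. Getting this variance right is what turns $f(x)\le p(x)$ together with primeness of $f(x)$ into the equality $f(x)=p(x)$ under the $T_1$ hypothesis. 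Everything else is a direct appeal to Corollary~\ref{cor:T0feqp}, Lemma~\ref{lem:flep}, and the standard sober-space dictionary.
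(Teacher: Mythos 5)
Your proof is correct and follows essentially the same route as the paper's: sobriety turns the primeness of $f(x)$ into $f(x)=p(z)$ for some $z\in X$, Lemma~\ref{lem:flep} together with the $T_1$ hypothesis collapses $z$ to $x$, and Corollary~\ref{cor:T0feqp} finishes (the forward direction is likewise the paper's, except that you get $T_1$ directly from Hausdorffness rather than via the Scott-locale Lemma~\ref{lem:T1}). One quibble: your phrase ``$f(x)$ lies in the closure of (the point corresponding to) $p(x)$'' points the specialization relation the opposite way from your own ``$p(x)\sqsubseteq f(x)$'' (the correct reading of $f(x)=p(z)\subseteq p(x)$ is $x\in\overline{\{z\}}$), but since $T_1$ trivializes the specialization order either direction yields $f(x)=p(x)$, so nothing breaks.
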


\begin{proof}
$\eqref{main0}\Rightarrow \eqref{main3}$. If $X$ is Hausdorff then $p=f$, by \S\ref{cor:T0feqp}, so $p$ is continuous. Hence, $L$ is a Scott locale and, by \S\ref{lem:T1}, $X$ is $T_1$.

$\eqref{main3}\Rightarrow \eqref{main0}$. Assume that $X$ is $T_1$ and $f(X)\subseteq\spectrum(L)$.
Then for all $x\in X$ we have $f(x)=p(z)$ for some $z\in X$, and $p(z)\subseteq p(x)$ due to \S\ref{lem:flep}. Hence, $x\in\overline{\{z\}}$, so $x=z$ because $X$ is $T_1$. Therefore $f(x)=p(x)$ for all $x\in X$ and, by \S\ref{cor:T0feqp}, $X$ is Hausdorff.
\end{proof}

\begin{bibdiv}

\begin{biblist}

\bib{stonespaces}{book}{
  author={Johnstone, Peter T.},
  title={Stone Spaces},
  series={Cambridge Studies in Advanced Mathematics},
  volume={3},
  note={Reprint of the 1982 edition},
  publisher={Cambridge University Press},
  place={Cambridge},
  date={1986},
  pages={xxii+370},
  isbn={0-521-33779-8},
  review={\MR {861951 (87m:54001)}},
}

\bib{Kumar2021}{article}{
  author={Kumar, Abhilasha A.},
  title={Semantic memory: a review of methods, models, and current challenges},
  journal={Psychonomic Bulletin \& Review},
  date={2021},
  volume={28},
  pages={40--80},
}

\bib{MindWorldOrder}{book}{
  author={Lewis, Clarence Irving},
  title={Mind and The World-Order---Outline of a Theory of Knowledge},
  date={1929},
  publisher={Charles Scribner's Sons},
}

\bib{picadopultr}{book}{
  author={Picado, Jorge},
  author={Pultr, Ale{\v {s}}},
  title={Frames and locales --- topology without points},
  series={Frontiers in Mathematics},
  publisher={Birkh\"auser/Springer Basel AG, Basel},
  date={2012},
  pages={xx+398},
  isbn={978-3-0348-0153-9},
  review={\MR {2868166}},
  doi={10.1007/978-3-0348-0154-6},
}

\bib{picadopultr-separation}{book}{
  author={Picado, Jorge},
  author={Pultr, Ale\v {s}},
  title={Separation in point-free topology},
  publisher={Birkh\"{a}user/Springer, Cham},
  date={[2021] \copyright 2021},
  pages={xxi+281},
  isbn={978-3-030-53479-0},
  isbn={978-3-030-53478-3},
  review={\MR {4241435}},
  doi={10.1007/978-3-030-53479-0},
}

\bib{fop}{article}{
  author={Resende, Pedro},
  title={An abstract theory of physical measurements},
  journal={Found. Phys.},
  volume={51},
  date={2021},
  number={6},
  pages={Paper No. 108},
  issn={0015-9018},
  review={\MR {4335154}},
  doi={10.1007/s10701-021-00513-1},
}

\bib{msnew}{article}{
  author={Resende, Pedro},
  title={On the geometry of physical measurements: topological and algebraic aspects},
  journal={J.\ Geom.\ Phys.},
  volume={185},
  number={104730},
  doi={10.1016/j.geomphys.2022.104730},
  date={2023},
}

\bib{qualia}{report}{
  author={Resende, Pedro},
  title={Qualia as physical measurements: a mathematical model of qualia and pure concepts},
  date={2022},
  eprint={https://doi.org/10.48550/arXiv.2203.10602},
}

\bib{topologyvialogic}{book}{
  author={Vickers, Steven},
  title={Topology via logic},
  series={Cambridge Tracts in Theoretical Computer Science},
  volume={5},
  publisher={Cambridge University Press, Cambridge},
  date={1989},
  pages={xvi+200},
  isbn={0-521-36062-5},
  review={\MR {1002193}},
}

\end{biblist}

\end{bibdiv}
\vspace*{5mm}
\noindent {\sc
Centro de An\'alise Matem\'atica, Geometria e Sistemas Din\^amicos\\
Departamento de Matem\'{a}tica, Instituto Superior T\'{e}cnico\\
Universidade de Lisboa,
Av.\ Rovisco Pais 1, 1049-001 Lisboa, Portugal}\\
{\it E-mail:} {\sf pedro.m.a.resende@tecnico.ulisboa.pt,\ jsantos@math.tecnico.ulisboa.pt}

\end{document}